\documentclass{article}
 \usepackage{graphicx}
\usepackage{amsthm}
\usepackage{amsmath}
\usepackage{mathrsfs}
\usepackage[all,cmtip]{xy}
\usepackage{amssymb}
\usepackage{amsthm}
\usepackage{fancyhdr}

\begin{document}

\title{Stably free modules over virtually free groups\footnote{The final publication is available at www.springerlink.com DOI: 10.1007/s00013-012-0432-9}}
\author{Seamus O'Shea}
\date{}
\maketitle

\begin{abstract}
Let $F_m$ be the free group on $m$ generators and let $G$ be a finite nilpotent group of non square-free order; we show that for each $m\ge2$ the integral group ring ${\bf Z}[G\times F_m]$ has infinitely many stably free modules of rank 1. 
\end{abstract}

{\bf Keywords:} Stably free module, Milnor square.

{\bf  MSC:} Primary 20C07; Secondary 16D40, 19A13.

 \newtheorem{thm}{Theorem}[section]
 \newtheorem{cor}[thm]{Corollary}
 \newtheorem{lem}[thm]{Lemma}
 \newtheorem{prop}[thm]{Proposition}
 \theoremstyle{definition}
 \newtheorem{defn}[thm]{Definition}
 \theoremstyle{remark}
 \newtheorem{rem}[thm]{Remark}
 \newtheorem*{ex}{Example}

\section{Introduction}
We study stably free modules over the integral group ring ${\bf Z}[G \times F_m]$,
where $G$ is a finite group and $F_m$ is the free group on $m$
generators. A finitely generated module $P$ over a ring $\Lambda$ is said to be stably free when there exists a natural number $n$ such that $P \oplus \Lambda^n \cong \Lambda^m$ for some $m$. Provided that $\Lambda$ satisfies the invariant basis number property (see \cite{cohn2}), we may uniquely define the rank of $P$ to be $m-n$. Every integral group ring has the invariant basis number property.  We shall prove:

\begin{thm}\label{A}
Let $G$ be a finite nilpotent group of non square-free order. Then there are
infinitely many non-isomorphic stably free modules of rank 1 over  ${\bf
Z}[G \times F_m]$ when $m\ge 2$.
\end{thm}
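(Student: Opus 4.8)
\emph{Proof strategy.} The plan is to realise ${\bf Z}[G\times F_m]$ as the top-left corner of a Milnor square whose lower-right corner is forced to be non-reduced by the non-square-free hypothesis, and to detect non-free stably free modules through the \emph{unstable} part of the associated Mayer--Vietoris patching. First I would extract the arithmetic input: as $|G|$ is not square-free there is a prime $p$ with $p^2\mid|G|$, and as $G$ is nilpotent it splits as $G=G_p\times G'$; projecting onto the Sylow $p$-subgroup and passing to a Frattini quotient yields a surjection $G\twoheadrightarrow Q$ with $Q\cong C_{p^2}$ or $Q\cong C_p\times C_p$. In the cyclic case the identity $t^{p^2}-1=(t^p-1)\,\Phi_{p^2}(t)$ together with $\Phi_{p^2}(t)\equiv p\pmod{t^p-1}$ presents ${\bf Z}[C_{p^2}]$ as the pullback
\[
{\bf Z}[C_{p^2}]={\bf Z}[\zeta_{p^2}]\times_{{\bf F}_p[C_p]}{\bf Z}[C_p],
\]
a Milnor square whose corner ${\bf F}_p[C_p]\cong{\bf F}_p[s]/(s^p)$ is local and \emph{non-reduced}: setting $\nu=s^{p-1}$ gives $\nu\neq0$ but $\nu^2=0$. (This is precisely where non-square-free order enters; for $C_p$ the corresponding corner is the field ${\bf F}_p$ and no such $\nu$ exists.) The case $Q\cong C_p\times C_p$ gives the same corner ${\bf F}_p[C_p]$ by tensoring the classical square for ${\bf Z}[C_p]$ with ${\bf Z}[C_p]$.

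Applying $-\otimes_{\bf Z}{\bf Z}[F_m]$, which is flat and hence preserves pullbacks and surjectivity, produces a Milnor square with top-left corner ${\bf Z}[Q\times F_m]$ and lower-right corner ${\bf F}_p[C_p\times F_m]$. Because the factors $C_p$ and $F_m$ commute, $\nu$ is central in this corner, while the presence of $F_m$ with $m\ge2$ makes it non-commutative; the whole phenomenon will come from the interaction of the central square-zero element $\nu$ with the non-abelian group ring ${\bf F}_p[F_m]$. Writing $\Lambda_0={\bf F}_p[C_p\times F_m]$ and $\Lambda_1,\Lambda_2$ for the two intermediate rings ${\bf Z}[\zeta_{p^2}][F_m]$ and ${\bf Z}[C_p][F_m]$, Milnor patching identifies rank-one locally free modules with $2\times2$ patching matrices $\alpha\in GL_2(\Lambda_0)$: the module $M(\alpha)$ is stably free exactly when $[\alpha]=0$ in the cokernel of $K_1(\Lambda_1)\oplus K_1(\Lambda_2)\to K_1(\Lambda_0)$, and is free exactly when $\alpha$ is a product of matrices lifting to $GL_2(\Lambda_1)$ and $GL_2(\Lambda_2)$, with $M(\alpha)\cong M(\beta)$ iff $\alpha,\beta$ share a double coset. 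Since elementary matrices lift along the surjections $\Lambda_i\to\Lambda_0$, the obstruction is genuinely unstable: it records the failure of a $K_1$-trivial matrix to be a product of liftable ones.

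I would then produce the modules explicitly. Choosing two free generators $x,y\in F_m$, one forms a family $\alpha_n\in GL_2(\Lambda_0)$ built from $\nu$, $x$ and $y^n$ and arranged to be commutators, so that $[\alpha_n]=0$ in $K_1(\Lambda_0)$ and each $M(\alpha_n)$ is automatically stably free of rank one. Reduction modulo the square-zero ideal $\nu{\bf F}_p[F_m]$ exhibits the relevant part of $\Lambda_0$ as the trivial extension ${\bf F}_p[F_m]\ltimes\nu{\bf F}_p[F_m]$ with $\nu$ central, on which the group operation of unipotent matrices linearises, so that the class of $\alpha_n$ is recorded additively by $\nu\,y^n$. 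To transport this to ${\bf Z}[G\times F_m]$ I would choose an element of $G$ lying over a generator of $Q$, lift the defining rows to unimodular rows over ${\bf Z}[G\times F_m]$ (the requisite identities being group-ring relations that persist under the lift), and detect non-isomorphism after reduction to ${\bf Z}[Q\times F_m]$: a ring surjection cannot split isomorphism classes, so distinct classes downstairs force distinct modules upstairs.

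The hard part will be showing that the $\alpha_n$ occupy infinitely many distinct double cosets, equivalently that the $M(\alpha_n)$ are pairwise non-isomorphic. This is an unstable $K_1$-computation to which determinant and $SK_1$ techniques do not apply directly. I would attack it through a Fox-derivative or Dennis-trace type homomorphism, using that for the trivial extension ${\bf F}_p[F_m]\ltimes\nu{\bf F}_p[F_m]$ the relevant relative invariant is ${\bf F}_p[F_m]/[{\bf F}_p[F_m],{\bf F}_p[F_m]]$, the ${\bf F}_p$-space on the conjugacy classes of $F_m$; since $m\ge2$ these are infinite in number and the powers $y^n$ fall in distinct classes, while the images of the unit and $GL_2$ groups of $\Lambda_1$ and $\Lambda_2$ land in a proper (indeed infinite-codimensional) subspace. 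Separating those two facts — computing the image of the corner groups and verifying the invariant is non-trivial on the $y^n$ — together with the supporting lemma that rank-one stably free modules over ${\bf Z}[\zeta_{p^2}][F_m]$ and ${\bf Z}[C_p][F_m]$ are free, are the technical heart on which the argument rests.
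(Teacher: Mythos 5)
Your construction of the building blocks \textbf{(I)} and \textbf{(II)} is essentially the paper's own: the same pullback ${\bf Z}[C_{p^2}]\cong{\bf Z}[\zeta_{p^2}]\times_{{\bf F}_p[C_p]}{\bf Z}[C_p]$ (via $\Phi_{p^2}(t)\equiv p \bmod (t^p-1)$), the same square for $C_p\times C_p$, tensored with ${\bf Z}[F_m]$, with stably free modules produced from commutator units in ${\bf F}_p[C_p\times F_m]^*$ and distinguished by an invariant that in effect compares coefficients in the nilpotent variable; the paper executes this detection by first proving that ${\bf Z}[\zeta_{p^2}][F_m]$ and ${\bf Z}[C_p\times F_m]$ have only trivial units (Propositions \ref{unit}, \ref{units2}) and then expanding the relation $\delta_n=\psi_-(u)\delta_m\psi_+(u')$ in powers of $y=1-x$. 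Two bookkeeping points: rank-one locally free modules correspond to double cosets in $GL_1$ of the corner ring, not to $2\times 2$ patching matrices, and working at the $GL_1$ level makes your ``supporting lemma'' (stably free cancellation over ${\bf Z}[\zeta_{p^2}][F_m]$ and ${\bf Z}[C_p\times F_m]$) unnecessary --- by the bijection (\ref{oneone}), distinct double cosets give non-isomorphic modules outright, whereas your stabilized $GL_2$ presentation would force you to prove cancellation exactly where it is delicate.

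The genuine gap is the transport step from $Q$ to $G$. You propose to lift the defining unimodular rows from ${\bf Z}[Q\times F_m]$ to ${\bf Z}[G\times F_m]$, justified by the parenthetical claim that the requisite identities are ``group-ring relations that persist under the lift.'' They do not persist. The kernel of ${\bf Z}[G\times F_m]\to{\bf Z}[Q\times F_m]$ is generated by the elements $h-1$, $h\in H=\ker(G\to Q)$; it is not a radical ideal (e.g.\ $1-(h-1)=2-h$ is not a unit), and a lift of a group element of $Q$ satisfies strictly weaker relations than the element itself. Concretely, the identity $(1-x)^p\equiv 0\pmod p$, which is what makes $1+yt$ a unit and drives the entire construction, holds precisely because $x$ has order $p$ in the relevant quotient; a lift $g\in G$ typically has order $p^2$ or larger, and then $(1-g)^p\equiv 1-g^p\not\equiv 0\pmod p$. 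Consequently a lifted row satisfies $\tilde a\tilde c+\tilde b\tilde d=1+\kappa$ with $\kappa$ in the kernel, and there is no reason for $1+\kappa$ to be invertible: existence of stably free lifts is a real theorem, not a formality. This is exactly where the paper deploys its machinery: the Milnor square with corners ${\bf Z}[G\times F_m]$, ${\bf Z}[G\times F_m]/(\Sigma_H)$, ${\bf Z}[G/H\times F_m]$, $({\bf Z}/|H|)[G/H\times F_m]$, together with Johnson's criterion (Proposition \ref{up}) that the induced map on $\mathcal{SF}_1$ is surjective provided the bottom-right corner has SFC and is generalized Euclidean --- properties established for $({\bf Z}/N)[G/H\times F_m]$, $N$ not square-free, by all of Section 3 (Dicks--Sontag for $D[F_m]$, Cohn's theorem for $k[F_m]$, plus the artinian/radical arguments of Propositions \ref{bour} and \ref{localcohn}). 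Your detection direction (``a ring surjection cannot split isomorphism classes'') is correct, but it only applies once the lifts are known to exist as stably free modules; as written, that existence rests on a false principle, so the reduction from $G$ to $Q$ --- and with it the theorem --- is unproven.
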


In contrast Johnson \cite{feaj} has shown that both ${\bf Z}[C_p \times F_m]$ and
${\bf Z}[D_{2p} \times F_m]$ admit no non-free stably free modules when $p$ is
prime, $C_p$ is the cyclic group of order $p$ and $D_{2p}$ is the dihedral group of
order $2p$. Johnson \cite{feajfield} has also shown that $k[G \times F_m]$ admits no non-free stably free modules when $k$ is any field and $G$ is any finite group.

\pagestyle{plain}
In order to produce examples of non-free stably free modules we will use techniques of
Milnor \cite{milnor} for constructing projective modules over fibre product rings.
Denote by $\mathcal{SF}_1(\Lambda)$ the set of isomorphism classes of stably free
modules of rank 1 over a ring $\Lambda$. Our main theorem will be deduced from the
following two special cases:

\textbf{(I)}: $\mathcal{SF}_1({\bf Z}[C_{p^2}\times F_m])$ is infinite for every
prime $p$ and $m\ge2$;

\textbf{(II)}: $\mathcal{SF}_1({\bf Z}[C_{p}\times C_p\times F_m])$ is infinite for
every prime $p$ and $m\ge2$.

\section{Constructing stably free modules}

In \cite{milnor}, Milnor introduced techniques for analysing the structure of
projective modules over a fibre product ring in terms of its factors. These
techniques were further developed by Swan in \cite{swan1} to investigate the
structure of stably free modules over various group rings. Consider a commutative
square of ring homomorphisms 

\begin{equation}\label{mils}
\mathcal{A}\ = \left\{
\vcenter{\xymatrix{A\ar[r]^{\pi_-}\ar[d]^{\pi_+}&A_-\ar[d]^{\psi_-}\\
A_+\ar[r]^{\psi_+}&A_0\\}}\right.
\end{equation}
such that: (i) $A$ is the fibre product of $A_-$ and $A_+$ over $A_0$ :
$A=A_-\times_{A_0} A_+$; (ii) $\psi_+$ is surjective. Such a square will be called a
\emph{Milnor square}.

Any right module $M$ over $A$ determines a triple $(M_+,M_-;\alpha(M))$ where
$M_\sigma = M\otimes_{\pi_\sigma} A_\sigma$ for $\sigma = +, -$ and $\alpha(M):M\otimes_{\psi_+\pi_+}
A_0\to M\otimes_{\psi_-\pi_-} A_0$ is an $A_0$-module isomorphism. Conversely, any triple $(M_+,M_-;\alpha)$ 
with $M_\sigma$ a (right) $A_\sigma$ module for $\sigma =+,-$ and $\alpha$ an isomorphism $\alpha:M_+ \otimes_{\psi_+} A_0 \to M_-\otimes_{\psi_-} A_0$ determines an $A$ module given explicitly as  
$$
\langle M_+, M_-, \alpha \rangle = \{ (m_+, m_-)\in M_+ \times M_- \ | \ \alpha(m_+\otimes 1 ) = m_-\otimes 1 \}.
$$
The $A$-action on $\langle M_+, M_-, \alpha \rangle$  is then given by $$ (m_+,m_-)\cdot a = (m_+\cdot\pi_+(a), m_-\cdot\pi_-(a)).$$

$M$ is said to be \emph{locally free} if both $M_+$ and $M_-$ are free; since $A_+\otimes A_0\cong A_-\otimes A_0\cong A_0$ the rank of $M_+$ and $M_-$ are necessarily the same, and we say that the rank of $M$ is this common rank. A locally free module over $A$ is automatically projective (see \cite{milnor}); the
question of when it is stably free is rather more delicate. Denote the set of
isomorphism classes of finitely generated locally free modules of rank $n$ over $A$
by $\mathcal{LF}_n(A)$. The triple $(M_+,M_-;\alpha(M))$ associated to $M$ does not
completely determine $M$; however by (\cite{swan1}, Lemma A4) there exists a
bijection
\begin{equation}\label{oneone}
\mathcal{LF}_n(A) \leftrightarrow \psi_-(GL_n(A_-))\backslash
GL_n(A_0)/\psi_+(GL_n(A_+))
\end{equation}
Abbreviate the space of double cosets on the right to $\overline{GL_n}(\mathcal{A})$. For each
pair of integers $n, k \ge 1$ define a stabilization map 

\begin{align*}
s_{n,k}:\overline{GL_n}(\mathcal{A})&\to \overline{GL_{n+k}}(\mathcal{A})\\
[\alpha]&\mapsto [\alpha\oplus I_k]
\end{align*}
Then since the free module $A^n$ determines the triple $(A_+^n,A_-^n;I_n)$, we have that
$M$ is stably free if and only if $\alpha = \alpha(M)$ satisfies $s_{n,k}(\alpha)=[I_{n+k}]$ for
some $k$.

The following proposition allows us to construct the original examples of non-trivial
stably free modules claimed in \textbf{(I)} and \textbf{(II)} of the introduction.

\begin{prop}\label{isf}
Let  $\mathcal{A}$ be a Milnor square as in (\ref{mils}) and suppose that 
\begin{equation}
\psi_-(A_-^*)\backslash [A_0^*,A_0^*]/\psi_+(A_+^*)
\end{equation} is infinite, where $A_\sigma^*$ denotes the unit subgroup of $A_\sigma$ and $[ A^*_0, A^*_0 ]$ denotes the commutator subgroup of $A_0^*$.
Then $\mathcal{SF}_1(A)$ is infinite.
\end{prop}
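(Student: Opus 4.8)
The plan is to exhibit an injection from the double coset space $\psi_-(A_-^*)\backslash[A_0^*,A_0^*]/\psi_+(A_+^*)$, which is infinite by hypothesis, into $\mathcal{SF}_1(A)$. Setting $n=1$ in the bijection (\ref{oneone}), every unit $c\in A_0^*=GL_1(A_0)$ determines a rank $1$ locally free module $M_c=\langle A_+,A_-,c\rangle$, and $M_c\cong M_{c'}$ as $A$-modules precisely when $c$ and $c'$ represent the same double coset in $\psi_-(A_-^*)\backslash A_0^*/\psi_+(A_+^*)$. Restricting the assignment $c\mapsto[M_c]$ to the commutator subgroup therefore descends to an injection of $\psi_-(A_-^*)\backslash[A_0^*,A_0^*]/\psi_+(A_+^*)$ into $\mathcal{LF}_1(A)$; it remains only to check that its image consists of stably free modules.

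For this I would invoke Whitehead's lemma. Write $E_n(A_0)$ for the subgroup of $GL_n(A_0)$ generated by elementary matrices, and $E(A_0)=\bigcup_k E_k(A_0)$ for the stable elementary group. Since $c$ lies in $[GL_1(A_0),GL_1(A_0)]$, it lies a fortiori in the stable commutator subgroup $[GL(A_0),GL(A_0)]$, which by Whitehead's lemma equals $E(A_0)$. Hence there is some $k$ with $c\oplus I_k\in E_{1+k}(A_0)$. Now the surjectivity of $\psi_+$ (condition (ii) of a Milnor square) is exactly what is needed to lift elementary matrices: for a generator $e_{ij}(x)$ of $E_{1+k}(A_0)$, choosing a preimage $\tilde x\in A_+$ of $x$ gives $\psi_+(e_{ij}(\tilde x))=e_{ij}(x)$, so that $E_{1+k}(A_0)\subseteq\psi_+(GL_{1+k}(A_+))$. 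Thus $c\oplus I_k\in\psi_+(GL_{1+k}(A_+))$, which says precisely that $s_{1,k}([c])=[c\oplus I_k]=[I_{1+k}]$ in $\overline{GL_{1+k}}(\mathcal{A})$; by the stable-freeness criterion recorded above, $M_c$ is stably free.

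Combining the two steps, the injection factors through $\mathcal{SF}_1(A)$, so $\mathcal{SF}_1(A)$ contains a copy of the infinite set $\psi_-(A_-^*)\backslash[A_0^*,A_0^*]/\psi_+(A_+^*)$ and is therefore infinite.

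The one step demanding care is the passage through the stable Whitehead lemma: a product of commutators in $GL_1(A_0)$ need not become elementary at any fixed level $GL_n(A_0)$, so one genuinely must stabilize to an a priori uncontrolled $k$. This is harmless here only because the definition of stable freeness quantifies over all $k$; the surjectivity of $\psi_+$ then does the remaining work, converting ``elementary over $A_0$'' into ``in the image of $GL_{1+k}(A_+)$'', which is one of the two subgroups by which we quotient.
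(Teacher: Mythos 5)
Your proof is correct and follows essentially the same route as the paper: both pass from the double coset space to rank 1 locally free modules via the bijection (\ref{oneone}), and both establish stable freeness by combining Whitehead's lemma with the surjectivity of $\psi_+$ to absorb elementary matrices into $\psi_+(GL(A_+))$. The only difference is that the paper invokes the unstabilized $2\times 2$ form of Whitehead's lemma --- since $\mathrm{diag}(u,1)\,\mathrm{diag}(v,1)=\mathrm{diag}(uv,1)$ and $\mathrm{diag}([a,b],1)\in E_2(A_0)$ for units $a,b$, one already has $\mathrm{diag}(c,1)\in E_2(A_0)$ for every $c\in[A_0^*,A_0^*]$ --- so a single stabilization suffices and $P\oplus A\cong A^2$; hence the caution in your final paragraph about an a priori uncontrolled $k$ is unnecessary (though harmless, as stable freeness only requires some $k$).
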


\begin{proof}
Let $\{a_i\}_{i\in I}$ be an infinite set of coset representatives in 
$\psi_-(A_-^*)\backslash [A_0^*,A_0^*]/\psi_+(A_+^*)$. Each $a_i$ determines an automorphism $(a_i)\in GL_1(A_0)$ and so for each $i\in I$ we may form the locally free
$A$-module $P_i=\langle A_+,A_-;(a_i)\rangle$. Then clearly by (\ref{oneone}) $P_i\cong P_j$ if and only if $i=j$. To
see that each $P_i$ is stably free, consider $s_{1,1}([a_i])=\left[\begin{array}{cc} a_i&0\\
0&1 \end{array}\right]$. Each $a_i\in [A_0^*,A_0^*]$ and so by Whitehead's lemma
$s_{1,1}([a_i])\in E_2(A_0)$, where $E_2(A_0)$ is the subgroup of $GL_2(A_0)$ generated by the elementary matrices $E(i,j;a)=I_2+a\epsilon(i,j)$ ($a\in A_0$). Since $\psi_+ :A_+\to A_0$ is surjective, we have an inclusion 
$E_2(A_0)= \psi_+E_2(A_+)\subset \psi_+GL_2(A_+)$ and therefore $s_{1,1}([a_i])= [I_2] \in
\overline{GL_2}(\mathcal{A})$ and so $P_i\oplus A \cong A^2$.

\end{proof}

\section{Stably free cancellation and generalized Euclidean rings}

A ring $\Lambda$ is said to have stably free cancellation (SFC) when every stably
free module over $\Lambda$ is actually free. All principal ideal domains have SFC,
as do all local rings. Much of the following discussion is due to Johnson \cite{feaj}.

For any ring $\Lambda$, denote by $\mbox{rad}(\Lambda)$ the Jacobson radical of
$\Lambda$. Recall that an ideal {\bf m} of $\Lambda$ is said to be \emph{radical} when
${\bf m}\subset \mbox{rad}(\Lambda)$.
It is a clear consequence of (\cite{bass}, p.90, Prop. 2.12) that:

\begin{prop}\label{bour}
Let {\bf m} be a two sided radical ideal in $\Lambda$. Then 
\[
\Lambda/{\bf m}\ \mbox{has SFC} \implies \Lambda\ \mbox{has SFC}
\]
\end{prop}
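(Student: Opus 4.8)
The plan is to pass to the quotient ring $\Lambda/{\bf m}$, where stable freeness becomes freeness by hypothesis, and then to lift this triviality back to $\Lambda$ using the fact that ${\bf m}$ is a radical ideal. Write $\overline{\Lambda} = \Lambda/{\bf m}$ and, for a right $\Lambda$-module $N$, write $\overline{N} = N \otimes_\Lambda \overline{\Lambda} = N/N{\bf m}$.

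Suppose $P$ is a stably free $\Lambda$-module, so that $P \oplus \Lambda^n \cong \Lambda^m$ for some $n,m$; in particular $P$ is finitely generated and projective. First I would apply the base-change functor $-\otimes_\Lambda \overline{\Lambda}$, which is right exact, carries $\Lambda^k$ to $\overline{\Lambda}^k$, and respects direct sums and isomorphisms. This yields $\overline{P}\oplus\overline{\Lambda}^n \cong \overline{\Lambda}^m$, so $\overline{P}$ is a stably free $\overline{\Lambda}$-module. Since $\overline{\Lambda}$ has SFC by hypothesis, $\overline{P}$ is free, say $\overline{P}\cong\overline{\Lambda}^r \cong \overline{\Lambda^r}$ for some $r\ge 0$.

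The essential step is then to deduce from $\overline{P}\cong\overline{\Lambda^r}$ that $P\cong\Lambda^r$; that is, that reduction modulo ${\bf m}$ reflects isomorphisms between finitely generated projective modules. This is precisely what (\cite{bass}, p.90, Prop. 2.12) provides, and I expect it to be the only genuine obstacle in the argument, everything else being formal preservation of finite generation and stable freeness under the right-exact base change. The mechanism behind it is as follows: because $P$ is projective, an isomorphism $\overline{P}\to\overline{\Lambda^r}$ lifts to a homomorphism $\phi:P\to\Lambda^r$ whose reduction $\overline{\phi}$ is that isomorphism. Since $\overline{\phi}$ is surjective we have $\phi(P)+\Lambda^r{\bf m}=\Lambda^r$, so Nakayama's lemma --- applicable because ${\bf m}$ is radical and $\Lambda^r$ is finitely generated --- forces $\phi$ to be surjective. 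As $\Lambda^r$ is projective, $\phi$ splits, giving $P\cong\Lambda^r\oplus K$ with $K=\ker\phi$ finitely generated; reducing and using that $\overline{\phi}$ is an isomorphism gives $\overline{K}=0$, i.e. $K=K{\bf m}$, so a second application of Nakayama yields $K=0$. Hence $\phi$ is an isomorphism and $P$ is free, as required.
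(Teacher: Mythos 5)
Your proof is correct and takes essentially the same route as the paper: the paper deduces the proposition directly from the citation (Bass, p.~90, Prop.~2.12), which is precisely the ``reduction modulo ${\bf m}$ reflects isomorphisms of finitely generated projectives'' step you isolate as the only genuine obstacle. Your two-fold Nakayama argument (first for surjectivity of the lifted map $\phi$, then to kill the finitely generated kernel $K$ after splitting) is a correct proof of that cited fact, so you have in effect reproduced the paper's argument with the black box opened.
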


Let $D$ be a (possibly non-commutative) division ring. Dicks and Sontag \cite{dicks} have shown that $D[F_m]$ has SFC. As a consequence of Morita equivalence, a matrix ring $M_n(\Lambda)$ has SFC if and only if $\Lambda$ has SFC; applying Wedderburn's theorem now shows that $\Lambda[F_m]$ has SFC for any left semi-simple ring $\Lambda$. (Note that a product $\Lambda= \Lambda_1\times \Lambda_2$ has SFC if and only if both $\Lambda_1 $ and
$\Lambda_2$ have SFC.) Now suppose that $\Lambda$ is a left artinian ring. The
canonical mapping $\phi:\Lambda \to \Lambda/\mbox{rad}(\Lambda)$ induces a
surjective ring homomorphism $\phi_*: \Lambda[F_m]\to \Lambda/\mbox{rad}(\Lambda)[F_m]$ in which
$\mbox{ker}(\phi_*) = \mbox{rad}(\Lambda)[F_m]$. Since $\Lambda$ is left artinian, $\mbox{rad}(\Lambda)$ is nilpotent (see Lam \cite{lam}, Theorem 4.12) and hence $\mbox{rad}(\Lambda)[F_m]$ is a radical ideal in $\Lambda[F_m]$. Applying \ref{bour} with ${\bf m}=\mbox{rad}(\Lambda)[F_m]$ now shows:

\begin{cor}\label{fin}
Let $\Lambda$ be a left artinian ring. Then $\Lambda[F_m]$ has SFC.
\end{cor}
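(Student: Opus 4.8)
The plan is to combine Proposition \ref{bour} with the fact, established in the preceding discussion, that $\Lambda'[F_m]$ has SFC whenever $\Lambda'$ is a left semisimple ring. The strategy is to pass from $\Lambda[F_m]$ to a semisimple quotient by killing the radical, and then lift the SFC property back along Proposition \ref{bour}.

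First I would identify the target of the reduction. Since $\Lambda$ is left artinian, $\Lambda/\mbox{rad}(\Lambda)$ is left semisimple by the classical structure theorem, so by Wedderburn it is a finite product of matrix rings over division rings. Combining the theorem of Dicks and Sontag \cite{dicks}, Morita invariance of SFC, and the fact that SFC passes through finite products, the paragraph above already yields that $(\Lambda/\mbox{rad}(\Lambda))[F_m]$ has SFC.

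Next I would check the two ring-theoretic facts needed to bring Proposition \ref{bour} into play. The surjection $\phi:\Lambda\to\Lambda/\mbox{rad}(\Lambda)$ induces a surjection $\phi_*:\Lambda[F_m]\to(\Lambda/\mbox{rad}(\Lambda))[F_m]$ whose kernel is precisely $\mbox{rad}(\Lambda)[F_m]$; this is the standard functoriality of group rings and gives the isomorphism $\Lambda[F_m]/\mbox{rad}(\Lambda)[F_m]\cong(\Lambda/\mbox{rad}(\Lambda))[F_m]$. The step requiring the artinian hypothesis is showing that $\mbox{rad}(\Lambda)[F_m]$ is a \emph{radical} ideal of $\Lambda[F_m]$, i.e.\ contained in $\mbox{rad}(\Lambda[F_m])$. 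Here I would use that $\mbox{rad}(\Lambda)$ is nilpotent for left artinian $\Lambda$ (Lam \cite{lam}, Theorem 4.12): if $\mbox{rad}(\Lambda)^n=0$ then $(\mbox{rad}(\Lambda)[F_m])^n=\mbox{rad}(\Lambda)^n[F_m]=0$, so $\mbox{rad}(\Lambda)[F_m]$ is a nilpotent two-sided ideal and hence automatically lies in the Jacobson radical.

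Finally I would apply Proposition \ref{bour} with ${\bf m}=\mbox{rad}(\Lambda)[F_m]$: since the quotient $\Lambda[F_m]/{\bf m}\cong(\Lambda/\mbox{rad}(\Lambda))[F_m]$ has SFC, so does $\Lambda[F_m]$. The only genuinely delicate point is the verification that ${\bf m}$ is radical, and this is exactly where nilpotency of $\mbox{rad}(\Lambda)$ — rather than mere semisimplicity of the quotient — is used; everything else is an assembly of results already in hand.
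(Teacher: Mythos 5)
Your proposal is correct and follows the paper's own argument essentially step for step: reduce to the semisimple quotient $\Lambda/\mbox{rad}(\Lambda)$ (whose group ring has SFC by Dicks--Sontag, Morita invariance, Wedderburn and closure under products), note that $\ker(\phi_*)=\mbox{rad}(\Lambda)[F_m]$ is nilpotent since $\mbox{rad}(\Lambda)$ is nilpotent for left artinian $\Lambda$, and apply Proposition \ref{bour}. The only difference is that you spell out explicitly why nilpotency forces $\mbox{rad}(\Lambda)[F_m]\subset\mbox{rad}(\Lambda[F_m])$, which the paper leaves implicit.
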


For any ring $\Lambda$ denote by $E_n(\Lambda)$ the subgroup of $GL_n(\Lambda)$
generated by the elementary matrices $E(i,j;\lambda) = I_n + \lambda \epsilon(i,j)$
$\ (\lambda\in \Lambda)$. Denote by $D_n(\Lambda)$ the subgroup of $GL_n(\Lambda)$
consisting of all diagonal matrices. $\Lambda$ is said to be \emph{generalized Euclidean}
when, for all $n\ge 2$ the following statement holds: for all $A\in GL_n(\Lambda)$ there exists $E\in
E_n(\Lambda)$ and $D\in D_n(\Lambda)$ such that $A=DE$; in other words every
invertible matrix over $\Lambda$ is reducible to a diagonal matrix by means of
elementary row and column operations. The notion of a generalized Euclidean ring is often useful when dealing with modules over fibre product rings.

A ring homomorphism $\phi: \Lambda_1 \to \Lambda_2$ induces a mapping $\phi : 
\mathcal{SF}_1(\Lambda_1) \to \mathcal{SF}_1(\Lambda_2)$ given by
$\phi(S)=S\otimes_{\phi}\Lambda_2$. The following is proven in \cite{feaj}:

\begin{prop}\label{up}
Let $A$ be a Milnor square. If
$A_0$ has SFC and is generalized Euclidean, then the induced map $\pi_+\times \pi_- :
\mathcal{SF}_1(A) \to \mathcal{SF}_1(A_+)\times \mathcal{SF}_1(A_-)$ is surjective.
\end{prop}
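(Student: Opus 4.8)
The plan is to realize a given pair $(S_+,S_-)\in\mathcal{SF}_1(A_+)\times\mathcal{SF}_1(A_-)$ as the pair of restrictions of a single rank-$1$ stably free $A$-module built by Milnor patching; the only real difficulty is to arrange that the patched module is stably free rather than merely projective. First I would push $S_+$ and $S_-$ into $A_0$. Since $S_\sigma$ is stably free of rank $1$, the base-changed module $S_\sigma\otimes_{\psi_\sigma}A_0$ is stably free of rank $1$ over $A_0$; because $A_0$ has SFC it is in fact free, so I may fix $A_0$-isomorphisms $h_\sigma:S_\sigma\otimes_{\psi_\sigma}A_0\xrightarrow{\sim}A_0$. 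Any unit $u\in A_0^*$ then determines a clutching isomorphism $\beta_u:=h_-^{-1}\circ u\circ h_+:S_+\otimes A_0\to S_-\otimes A_0$ and hence a patched module $S_u:=\langle S_+,S_-;\beta_u\rangle$. By Milnor's theory $S_u$ is finitely generated projective with $S_u\otimes_{\pi_\sigma}A_\sigma\cong S_\sigma$, so whichever $u$ I choose, $S_u$ maps to $(S_+,S_-)$ under $\pi_+\times\pi_-$. It remains only to choose $u$ making $S_u$ stably free.

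To test stable freeness I would stabilize. Choosing $n$ with $S_\sigma\oplus A_\sigma^n\cong A_\sigma^{n+1}$ for both $\sigma$, the module $S_u\oplus A^n$ becomes locally free of rank $n+1$; transporting the clutching $\beta_u\oplus I_n$ through these isomorphisms presents it as $\langle A_+^{n+1},A_-^{n+1};\gamma_u\rangle$ for some $\gamma_u\in GL_{n+1}(A_0)$, and by the stabilization criterion applied to $S_u\oplus A^n$ the module $S_u$ is stably free exactly when $[\gamma_u\oplus I_k]=[I]$ in $\overline{GL_{n+1+k}}(\mathcal{A})$ for some $k$. Here both hypotheses on $A_0$ enter. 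Because $A_0$ is generalized Euclidean I can write $\gamma_u=DE$ with $E\in E_{n+1}(A_0)$ and $D$ diagonal; surjectivity of $\psi_+$ gives $E_{n+1}(A_0)=\psi_+(E_{n+1}(A_+))$, so $E$ is absorbed into the right-hand coset and $[\gamma_u]=[D]$. Whitehead's lemma, exactly as already used in Proposition \ref{isf}, then collapses $D$ modulo $E_{n+1}(A_0)$ to a single diagonal unit, so that $[\gamma_u\oplus I_k]=[I]$ for some $k$ precisely when the class of $\gamma_u$ lies in $\psi_{+*}K_1(A_+)+\psi_{-*}K_1(A_-)\subseteq K_1(A_0)$.

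The heart of the matter, and the step I expect to be the main obstacle, is to show this membership can be forced by the choice of $u$. Tracking the identifications, the class of $\gamma_u$ in $K_1(A_0)$ has the form $c+[u]$, where $c$ is a fixed class coming from the stabilizing isomorphisms $\phi_\sigma:S_\sigma\oplus A_\sigma^n\cong A_\sigma^{n+1}$ together with the $h_\sigma$, and $[u]$ is the image of $u$ under $A_0^*\to K_1(A_0)$. Since $A_0$ is generalized Euclidean, every invertible matrix is diagonalizable modulo elementary matrices and each diagonal reduces to a single entry, so $A_0^*\to K_1(A_0)$ is surjective; I can therefore solve $[u]\equiv -c$ modulo $\psi_{+*}K_1(A_+)+\psi_{-*}K_1(A_-)$ and thereby kill the obstruction. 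With this $u$ the class $[\gamma_u\oplus I_k]$ is trivial for large $k$, giving $S_u\oplus A^{n+k}\cong A^{n+1+k}$, so $S_u\in\mathcal{SF}_1(A)$ restricts to $(S_+,S_-)$ and surjectivity follows.

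The care needed to identify the fixed class $c$ precisely, and to confirm that the coset move by $[u]$ is exactly the indeterminacy in the patching construction, is where the bookkeeping must be done carefully. Conceptually this is the statement that, in the Mayer--Vietoris sequence for $\mathcal{A}$, the obstruction $[S_u]\in\tilde K_0(A)$ lies in the image of the connecting map $K_1(A_0)\to\tilde K_0(A)$ (because $[S_\sigma]=0$ in $\tilde K_0(A_\sigma)$ for stably free $S_\sigma$), and that, $K_1(A_0)$ being generated by units under the generalized Euclidean hypothesis, this image is swept out as $u$ ranges over $A_0^*$. I would ultimately phrase the argument either way, citing \cite{milnor} for the patching and restriction properties and \cite{swan1} for the double-coset description used above.
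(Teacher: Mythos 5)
Your argument is correct, but there is nothing in the paper to compare it against: the paper does not prove Proposition \ref{up} at all, it simply quotes it from Johnson \cite{feaj}. So what you have produced is a genuine self-contained proof, and it is assembled entirely from the toolkit the paper itself sets up (Milnor patching, the double-coset classification, absorption of elementary matrices via surjectivity of $\psi_+$, Whitehead's lemma). Each step checks out: SFC of $A_0$ makes $S_\sigma\otimes_{\psi_\sigma}A_0$ free, so patching along a unit $u\in A_0^*$ yields a projective $S_u$ with $S_u\otimes A_\sigma\cong S_\sigma$; stabilizing presents $S_u\oplus A^n$ as $\langle A_+^{n+1},A_-^{n+1};\gamma_u\rangle$ with $\gamma_u=P\,(u\oplus I_n)\,Q$ for fixed $P,Q\in GL_{n+1}(A_0)$, whence $[\gamma_u]=c+[u]$ in $K_1(A_0)$ with $c=[P]+[Q]$ fixed; and the generalized Euclidean hypothesis together with Whitehead's lemma makes $A_0^*\to K_1(A_0)$ surjective, so $u$ can be chosen to kill the obstruction. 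Two refinements are worth recording. First, the backward implication in your criterion (membership of $[\gamma_u]$ in $\psi_{+*}K_1(A_+)+\psi_{-*}K_1(A_-)$ implies stable triviality of the double coset) is true but not free of charge: the elementary factor witnessing the $K_1$ identity appears to the left of $\psi_-(GL(A_-))$ and must be conjugated across it — legitimate because $E(A_0)=[GL(A_0),GL(A_0)]$ is normal in the stable group $GL(A_0)$ — before surjectivity of $\psi_+$ can absorb it into the right-hand coset; as stated this is the piece of "bookkeeping" you flag but do not carry out. Second, you can bypass that delicacy entirely: since $A_0^*\to K_1(A_0)$ is onto, solve $[u]=-c$ exactly rather than modulo the subgroup; then $[\gamma_u]=0$ in $K_1(A_0)$, i.e.\ $\gamma_u\oplus I_k\in E_{n+1+k}(A_0)=\psi_+(E_{n+1+k}(A_+))$ for some $k$, and this is absorbed directly on the right exactly as in the proof of Proposition \ref{isf}, giving $S_u\oplus A^{n+k}\cong A^{n+1+k}$. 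With that choice your proof needs only the trivial direction of your equivalence and closes without any further care.
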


A well known theorem of Cohn \cite{cohn} states that $k[F_m]$ is generalized Euclidean whenever $k$ is a
division ring. If $\Lambda$ is generalized Euclidean, then so is the matrix ring $M_n(\Lambda)$. Since generalized Euclidean rings are closed under products, we have that $\Lambda[F_m]$ is generalized Euclidean whenever $\Lambda$ is semi-simple.

\begin{prop}\label{localcohn}
If $\Lambda$ is a ring such that $\Lambda/I$ is generalized Euclidean for some nilpotent ideal $I$, then $\Lambda$ is also generalized Euclidean. 
\end{prop}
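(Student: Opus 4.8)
The plan is to verify the defining condition directly: for each $n \ge 2$ I will show $GL_n(\Lambda) = D_n(\Lambda)E_n(\Lambda)$ by reducing a matrix modulo $I$, factoring it there, lifting the factors, and then disposing of the resulting error term. Write $\phi\colon \Lambda \to \Lambda/I$ for the quotient map and $\bar{M}$ for the entrywise image of a matrix $M$. Since $I$ is nilpotent it is a radical ideal, $I \subseteq \mathrm{rad}(\Lambda)$ (for $x \in I$ and any $y$ the element $1-xy$ is invertible because $xy$ is nilpotent). Two lifting statements follow. First, any elementary matrix $E(i,j;\bar\lambda)$ over $\Lambda/I$ is the image of $E(i,j;\lambda) \in E_n(\Lambda)$ for any preimage $\lambda$ of $\bar\lambda$. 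Second, any unit of $\Lambda/I$ lifts to a unit of $\Lambda$, since an element of $\Lambda$ is invertible as soon as its image modulo $\mathrm{rad}(\Lambda)$ is; hence every $\bar D \in D_n(\Lambda/I)$ lifts to some $D \in D_n(\Lambda)$.

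Given $A \in GL_n(\Lambda)$, the hypothesis that $\Lambda/I$ is generalized Euclidean gives a factorisation $\bar A = \bar D\,\bar E$ with $\bar D \in D_n(\Lambda/I)$ and $\bar E \in E_n(\Lambda/I)$. Lifting as above to $D \in D_n(\Lambda)$ and $E \in E_n(\Lambda)$ and setting $C := D^{-1}AE^{-1}$, we find $\bar C = I_n$, so $C = I_n + N$ with every entry of $N$ lying in $I$. Because $D_n(\Lambda)$ normalises $E_n(\Lambda)$ — indeed $D\,E(i,j;\lambda)\,D^{-1} = E(i,j;d_i\lambda d_j^{-1})$ — the product $D_n(\Lambda)E_n(\Lambda)$ is a subgroup containing both $D$ and $E$. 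As $A = DCE$, the proposition reduces to showing that $C \in D_n(\Lambda)E_n(\Lambda)$.

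This final step is where radicality is essential. The diagonal entries of $C = I_n + N$ have the form $1 + (\text{element of } I)$ and are therefore units, so each is available as a pivot; this is exactly the feature a general ideal cannot supply, and it is what lets the reduction succeed without any Euclidean structure on $\Lambda$ itself. I argue by induction on $n$, the case $n=1$ being trivial. Using the unit in position $(1,1)$, clear the rest of the first column by left multiplication by elementary matrices and the rest of the first row by right multiplication, reaching a block matrix $\mathrm{diag}(1+n_{11},\,C')$. A direct check shows the entries of the $(n-1)\times(n-1)$ block $C'$ are again congruent to those of $I_{n-1}$ modulo $I$, since the correction terms lie in $I\cdot\Lambda\cdot I \subseteq I$; thus the inductive hypothesis applies to $C'$. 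Altogether this writes $C = E'D_0E''$ with $E',E'' \in E_n(\Lambda)$ and $D_0 \in D_n(\Lambda)$, so $C \in E_n(\Lambda)D_n(\Lambda)E_n(\Lambda) = D_n(\Lambda)E_n(\Lambda)$, the last equality again using that $D_n$ normalises $E_n$. Combined with $A = DCE$, this gives $A \in D_n(\Lambda)E_n(\Lambda)$.

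I expect the main obstacle to be the inductive reduction of $C$: one must confirm that the Schur-complement entries created while clearing the first row and column remain in the congruence subgroup, so that the induction closes and the pivots stay units at every stage. This bookkeeping, together with the observation that the unit pivots are furnished by $I \subseteq \mathrm{rad}(\Lambda)$ rather than by any division algorithm on $\Lambda$, is the whole content of the argument.
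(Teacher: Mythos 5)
Your proof is correct and follows essentially the same route as the paper's: factor the image of $A$ in $GL_n(\Lambda/I)$, lift the diagonal and elementary factors (using nilpotency of $I$ to lift units), and then diagonalize the resulting matrix congruent to $I_n$ by elementary operations, using that its diagonal entries are units because $I \subseteq \mathrm{rad}(\Lambda)$. Your write-up is in fact somewhat more careful than the paper's, making explicit the Schur-complement bookkeeping and the fact that $D_n(\Lambda)$ normalises $E_n(\Lambda)$, both of which the paper leaves implicit.
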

\begin{proof}
Let $A\in GL_n(\Lambda)$ and consider the matrix $\psi_*(A)\in GL_n(\Lambda/I)$, where $\psi_*:GL_n(\Lambda) \to GL_n(\Lambda/I)$ is induced by the mapping $\psi : \Lambda \to \Lambda/I$. Then by hypothesis we may write $\psi_*(A) = D E$ for some $D\in D_n(\Lambda/I)$ and some $E\in E_n(\Lambda/I)$. Choose $\hat{D}\in D_n(\Lambda)$ and $\hat{E}\in E_n (\Lambda)$ such that $\psi_*(\hat{D})=D^{-1}$ and $\psi_*(\hat{E}) = E^{-1}$; if $X =A\hat{E}\hat{D}$ then clearly $\psi_*(X) = I_n$. 

Since $\psi(X_{nn})= 1 $, we have that $X_{nn}$ is a unit as $I$ is nilpotent. Therefore by means of elementary row and column operations we may reduce $X$ so that $X_{rn}=X_{nr}=0$ for $r\neq n$.  Repeating this operation for each diagonal element of $X$ we can reduce $X$, and hence $A$, to a diagonal matrix as required.
\end{proof}

As noted above, $\mbox{rad}(\Lambda)$ is nilpotent whenever $\Lambda$ is left artinian. Applying \ref{localcohn} with $I = \mbox{rad}(\Lambda)[F_m]$ gives:
\begin{cor}\label{fin2}
Let $\Lambda$ be a left artinian ring. Then $\Lambda[F_m]$ is generalized Euclidean.
\end{cor}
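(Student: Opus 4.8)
The plan is to obtain this as an immediate application of Proposition \ref{localcohn}, taking the nilpotent ideal to be $I = \mbox{rad}(\Lambda)[F_m]$. Everything of substance has already been established: Cohn's theorem together with the closure of generalized Euclidean rings under passage to matrix rings and finite products gives that $\Lambda'[F_m]$ is generalized Euclidean for every semi-simple ring $\Lambda'$, while Proposition \ref{localcohn} lets us lift the generalized Euclidean property from a quotient by a nilpotent ideal back up to the full ring. So the task reduces to verifying the two hypotheses of \ref{localcohn} for this particular $I$.

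First I would check that $I$ is nilpotent. Since $\Lambda$ is left artinian, $\mbox{rad}(\Lambda)$ is nilpotent (Lam, Theorem 4.12), say $\mbox{rad}(\Lambda)^k = 0$. A short induction using the group-ring multiplication shows that $(\mbox{rad}(\Lambda)[F_m])^j \subseteq \mbox{rad}(\Lambda)^j[F_m]$ for each $j$, so $I^k = 0$ and $I$ is nilpotent as required.

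Next I would identify the quotient. The canonical surjection $\Lambda \to \Lambda/\mbox{rad}(\Lambda)$ induces a surjective ring homomorphism $\Lambda[F_m] \to (\Lambda/\mbox{rad}(\Lambda))[F_m]$ whose kernel is precisely $\mbox{rad}(\Lambda)[F_m] = I$, giving $\Lambda[F_m]/I \cong (\Lambda/\mbox{rad}(\Lambda))[F_m]$. Because $\Lambda$ is left artinian, $\Lambda/\mbox{rad}(\Lambda)$ is semi-simple, and hence $(\Lambda/\mbox{rad}(\Lambda))[F_m]$ is generalized Euclidean by the remark following Cohn's theorem. Thus $\Lambda[F_m]/I$ is generalized Euclidean.

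With both hypotheses verified, Proposition \ref{localcohn} applied with this $I$ immediately yields that $\Lambda[F_m]$ is generalized Euclidean. No genuine obstacle arises at this stage: the real work lies in the proof of \ref{localcohn} and in Cohn's theorem, and the only points demanding any care are the nilpotency of $I$ and the identification of the quotient with a group ring over the semi-simple ring $\Lambda/\mbox{rad}(\Lambda)$, both of which are routine.
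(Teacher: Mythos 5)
Your proposal is correct and follows exactly the paper's own route: the paper also obtains the corollary by applying Proposition \ref{localcohn} with $I = \mathrm{rad}(\Lambda)[F_m]$, using the nilpotency of $\mathrm{rad}(\Lambda)$ for left artinian $\Lambda$ and the fact that the quotient $(\Lambda/\mathrm{rad}(\Lambda))[F_m]$ is generalized Euclidean since $\Lambda/\mathrm{rad}(\Lambda)$ is semi-simple. Your write-up merely makes explicit two details the paper leaves implicit (the induction showing $I$ is nilpotent and the identification of $\Lambda[F_m]/I$ with the group ring over the semi-simple quotient), which is fine.
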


\section{Proof of I and II}
It is easy to show that, if $I$ and $J$ are ideals of a ring $R$, then 
$$
\xymatrix{R/(I\cap J) \ar[r]\ar[d] & R/J \ar[d]\\
R/I \ar[r] & R/(I + J)}
$$
is a fibre square.
For any positive integer $d$ let $c_d(x)$ denote the $d^{th}$ cyclotomic polynomial. From the factorization $(x^{p^2}-1) = c_{p^2}(x)c_p(x)c_1(x) = c_{p^2}(x)(x^p -1)$ we obtain the Milnor square
$$
\xymatrix{ {\bf Z}[x]/(x^{p^2}-1)\ar[r]\ar[d]& {\bf Z}[x]/(c_{p^2}(x))\ar[d] \\
{\bf Z}[x]/(x^p-1)\ar[r] & {\bf Z}[x]/I}
$$
where $I$ is the sum of the ideals $(x^p-1)$ and $(c_{p^2}(x))$. However, since  $c_{p^2}(x) = (x^{p(p-1)}+x^{p(p-2)}+\ldots+x^p+1)$, we have
$$
p = c_{p^2}(x) - (x^{p(p-2)}+2x^{p(p-3)}+\ldots+(p-2)x^p+(p-1))(x^p - 1),
$$
and hence $I = ( p , x^p -1)$. Therefore we may rewrite the above square as 
$$
\xymatrix{ {\bf Z}[x]/(x^{p^2}-1)\ar[r]\ar[d]& {\bf Z}[x]/(c_{p^2}(x))\ar[d] \\
{\bf Z}[x]/(x^p-1)\ar[r] & {\bf F}_p[x]/(x^p-1)}
$$
Writing $A[C_n]=A[x]/(x^n-1)$ and applying the functor $-\otimes {\bf Z}[F_m]$ we
obtain another Milnor square: 
\begin{equation*}
\mathcal{A}= \left\{
\vcenter{\xymatrix{ {\bf Z}[C_{p^2}\times F_m]\ar[r]\ar[d]& {\bf
Z}[\zeta_{p^2}][F_m]\ar[d]^{\psi_-} \\
{\bf Z}[C_p\times F_m]\ar[r]^{\psi_+} & {\bf F}_p[C_p\times F_m]}}\right.
\end{equation*}
where $\zeta_{p^2}$ is a primitive $p^2$-th root of unity.
Since ${\bf Z}[\zeta_{p^2}]$ is an integral domain, ${\bf Z}[\zeta_{p^2}][F_m]$ has
only trivial units (see \cite{passman}, p.591 and p.598); that is 
$$
{\bf Z}[\zeta_{p^2}][F_m]^*= ({\bf Z}[\zeta_{p^2}])^* \times F_m
$$
\begin{prop}\label{unit}
${\bf Z}[C_p\times F_m]^* = ({\bf Z}[C_p])^* \times F_m$
\end{prop}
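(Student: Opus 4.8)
The plan is to realise $R := \mathbf{Z}[C_p]$ as a subring of a product of two integral domains and transport the trivial-units theorem through the resulting embedding. Write $\mathbf{Z}[C_p\times F_m]=R[F_m]$, and observe that the elements $rg$ with $r\in R^*$ and $g\in F_m$ visibly constitute the subgroup $R^*\times F_m$; the content of the proposition is that there are no others. Since $R$ itself is not an integral domain we cannot feed $R[F_m]$ to Passman's theorem directly. Instead I would use the factorisation $x^p-1=(x-1)c_p(x)$, exactly as in the square displayed above for $C_{p^2}$: the augmentation $\epsilon\colon R\to\mathbf{Z}$ (with $x\mapsto 1$) and the map $\rho\colon R\to\mathbf{Z}[\zeta_p]$ (with $x\mapsto\zeta_p$) assemble into an injection $\iota=(\epsilon,\rho)\colon R\hookrightarrow\mathbf{Z}\times\mathbf{Z}[\zeta_p]$, because $R$ embeds in $R\otimes\mathbf{Q}=\mathbf{Q}[C_p]\cong\mathbf{Q}\times\mathbf{Q}(\zeta_p)$, the isomorphism being $(\epsilon,\rho)\otimes\mathbf{Q}$. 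Applying $-[F_m]$ coefficientwise yields an injective ring homomorphism $\iota_*\colon R[F_m]\hookrightarrow\mathbf{Z}[F_m]\times\mathbf{Z}[\zeta_p][F_m]$ into a product of group rings of $F_m$ over integral domains.

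Next I would take an arbitrary unit $u=\sum_g r_g g\in R[F_m]^*$ and inspect its two images. Each of $\epsilon_*(u)\in\mathbf{Z}[F_m]^*$ and $\rho_*(u)\in\mathbf{Z}[\zeta_p][F_m]^*$ is a unit in a group ring of a free group over a domain, so by the trivial-unit theorem already invoked above for $\mathbf{Z}[\zeta_{p^2}][F_m]$ (\cite{passman}) each is supported on a single element of $F_m$: say $\epsilon_*(u)=\pm g_1$ and $\rho_*(u)=\eta\,g_2$ with $g_1,g_2\in F_m$ and $\eta\in\mathbf{Z}[\zeta_p]^*$. Comparing $F_m$-supports coefficientwise gives $\epsilon(r_g)=0$ for $g\neq g_1$ and $\rho(r_g)=0$ for $g\neq g_2$; hence for $g\notin\{g_1,g_2\}$ we have $\iota(r_g)=(0,0)$, so $r_g=0$ by injectivity of $\iota$. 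Thus the support of $u$ lies in $\{g_1,g_2\}$.

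The crux, and the step I expect to be the main obstacle, is to rule out $g_1\neq g_2$. Assuming it, the support computation gives $\epsilon(r_{g_1})=\pm1$ while $\rho(r_{g_1})=0$, so $r_{g_1}\in\ker\rho$. But $\ker\rho$ is precisely the ideal generated by the norm element $N=1+x+\cdots+x^{p-1}$, and $\epsilon(N)=p$, so $\epsilon$ maps $\ker\rho$ into $p\mathbf{Z}$; this contradicts $\epsilon(r_{g_1})=\pm1$ because $p\ge2$. Therefore $g_1=g_2=:g_0$ and $u=r_{g_0}g_0$ is supported on a single element.

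Finally I would verify $r_{g_0}\in R^*$, where a little care is needed since $R$ has zero divisors. Now $r_{g_0}=ug_0^{-1}$ is a unit of $R[F_m]$ lying in $R$; writing $r_{g_0}^{-1}=\sum_h s_h h$ and comparing coefficients in $r_{g_0}^{-1}r_{g_0}=1$ gives $s_h\,r_{g_0}=0$ for $h\neq e$, and since a unit is not a zero divisor in $R[F_m]$ each such $s_h$ vanishes, so $r_{g_0}^{-1}=s_e\in R$ and $r_{g_0}\in R^*$. Hence every unit $u$ is a trivial unit $r_{g_0}g_0$, giving $\mathbf{Z}[C_p\times F_m]^*=(\mathbf{Z}[C_p])^*\times F_m$ as claimed.
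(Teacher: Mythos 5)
Your proof is correct and takes essentially the same route as the paper: both arguments split $\mathbf{Z}[C_p]$ along the factorization $x^p-1=(x-1)c_p(x)$, apply the trivial-units theorem to the two images of a unit in $\mathbf{Z}[F_m]$ and $\mathbf{Z}[\zeta_p][F_m]$, and use the arithmetic fact that the augmentation of anything in $\ker\rho=(1+x+\cdots+x^{p-1})$ lies in $p\mathbf{Z}$ to force the support onto a single group element. The differences (packaging the two maps as an embedding into a product rather than a fibre square, treating the two candidate support elements symmetrically, and explicitly checking that the surviving coefficient is a unit of $\mathbf{Z}[C_p]$, which the paper leaves implicit) are purely presentational.
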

\begin{proof}
Consider the following fibre square, which arises from the factorization $(x^p-1)=(x-1)c_p(x)$:
$$
\xymatrix{ {\bf Z}[C_{p}\times F_m]^*\ar[r]^{\pi_-}\ar[d]^{\pi_+}& {\bf
Z}[\zeta_{p}][F_m]^*\ar[d] \\
{\bf Z}[F_m]^*\ar[r] & {\bf F}_p[F_m]^*}
$$
Let $u\in  {\bf Z}[C_{p}\times F_m]^*$. Then $\pi_+(u)\in {\bf Z}[F_m]^*$ which has
only trivial units; thus 
$$
u= aw\ +\sum_{g\in F_m-\{w\}}a_gg
$$
where $a(1)=\pm1$, $w\in F_m$ and $a_g\in {\bf Z}[C_p]={\bf Z}[x]/(x^p-1)$. Each
$a_g$ is divisible by $(x-1)$ since $a_g\in \ker(\pi_+)$. Now consider
$$
\pi_-(u) = a(\zeta_p)w\ +\sum_{g\in F_m-\{w\}}a_g(\zeta_p)g.
$$ 
We cannot have $a(\zeta_p)=0$, or else $(1+x+\ldots+x^{p-1})|a \implies p | a(1)$ which is a contradiction. Therefore, since ${\bf Z}[\zeta_{p}][F_m]$ has only trivial units we must have
$a_g(\zeta_p)=0$ for all $g\in F_m - \{w\}$. Therefore both $(1+x+\ldots+ x^{p-1})$ and $(x-1)$ divide each
$a_g$ and so each $a_g=0$. 
\end{proof}

\begin{prop}\label{infi}
If $m\ge 2$ then $$X=\psi_-({\bf Z}[\zeta_{p^2}][F_m]^*)\backslash[{\bf F}_p[C_p\times
F_m]^*,{\bf F}_p[C_p\times F_m]^*]/\psi_+({\bf Z}[C_p\times F_m]^*)$$ is infinite.
\end{prop}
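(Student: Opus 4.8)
The plan is to analyze the double coset $X$ through the nilpotent structure of $A_0 = {\bf F}_p[C_p\times F_m]$. I would write $A_0 = R[F_m]$ with $R = {\bf F}_p[C_p] = {\bf F}_p[t]/(t^p)$, $t = x-1$; since $t$ is central, $J = tA_0$ is a two-sided nilpotent ideal with $J^p = 0$ and $A_0/J \cong {\bf F}_p[F_m]$. The quotient ${\bf F}_p[F_m]$ is a domain whose only units are the trivial ones ${\bf F}_p^*\times F_m$, and units lift along the nilpotent reduction $\rho: A_0^*\to {\bf F}_p^*\times F_m$, whose kernel is $1+J$.

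First I would reduce the two-sided double coset relation to a conjugation action. Using the identifications of units established above, both $\psi_-(U_-)$ and $\psi_+(U_+)$ lie in $R^*\times F_m$, where $U_- = {\bf Z}[\zeta_{p^2}][F_m]^*$ and $U_+ = {\bf Z}[C_p\times F_m]^*$: the map $\psi_-$ carries ${\bf Z}[\zeta_{p^2}]^*$ into the central subring $R^*$ and fixes $F_m$, while $\psi_+$ is reduction mod $p$. Consequently any relation $g' = ugv$ with $u\in\psi_-(U_-)$, $v\in\psi_+(U_+)$ has the shape $g' = s\,\gamma g\delta$ with $s\in R^*$ central and $\gamma,\delta\in F_m$. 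Applying $\rho$ to such a relation between two commutators $g, g'\in [A_0^*,A_0^*]\cap(1+J)$ forces $\delta = \gamma^{-1}$ and $s\equiv 1\pmod t$; that is, \emph{within} $1+J$ the double coset relation becomes simply $g' = s\,\gamma g\gamma^{-1}$.

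Next I would extract an invariant from the second layer of the $J$-adic filtration. The assignment $w: 1+J^2 \to W = J^2/J^3 \cong {\bf F}_p[F_m]$ (identifying $t^2c$ with $\bar c$) is a group homomorphism, and a short computation gives $w([1+x,1+y]) = \overline{xy-yx}$ for $x,y\in J$. Restricting the relation of the previous step to elements of $1+J^2$ forces $s\equiv 1 \pmod{t^2}$, and then $w$ transforms by $w\mapsto \gamma w\gamma^{-1} + c\,e$, where $\gamma$ acts by conjugation on ${\bf F}_p[F_m]$, $e$ is the identity of $F_m$, and $c\in{\bf F}_p$. Hence $X$ surjects onto the set of orbits of ${\bf F}_p[F_m]/{\bf F}_p e$ under conjugation by $F_m$, and it suffices to produce commutators whose $w$-images fall in infinitely many orbits.

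Finally, using $m\ge 2$, I would fix two distinct free generators $g,h$ and set $g_n = [\,1+tg^n,\ 1+th\,]\in [A_0^*,A_0^*]\cap(1+J^2)$ for $n\ge 1$, so that $w(g_n) = g^nh - hg^n$. To separate orbits I would invoke the exponent-sum (abelianization) invariant of $F_m$: conjugate elements have equal $g$-exponent sum, so the supports $\{g^nh, hg^n\}$ and $\{g^mh, hg^m\}$ can be matched by a single conjugation only when $n=m$, while the added multiple of $e$ is irrelevant since $e$ lies in none of these supports. Thus the classes of $w(g_n)$ are pairwise distinct, the $g_n$ are pairwise inequivalent, and $X$ is infinite. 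I expect the main obstacle to be the third step: correctly passing from the two-sided double coset to the conjugation-plus-scalar action and computing the transformation law for $w$. A related subtlety is that every conjugation-invariant linear functional (class sum) annihilates additive commutators such as $g^nh-hg^n$, so the orbits must be separated directly rather than through a single invariant functional.
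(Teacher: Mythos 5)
Your first two reductions are sound: using the trivial-unit descriptions of the two upper corners and of ${\bf F}_p[F_m]$, the double-coset relation really does collapse to $g' = s\,\gamma g\gamma^{-1}$ with $s\in R^*$ central and $\gamma\in F_m$, and for \emph{odd} primes your second-layer invariant $w$ and the commutators $g_n=[1+tg^n,\,1+th]$ do separate infinitely many classes. The genuine gap is the prime $p=2$, which the proposition must cover: statement (\textbf{I}) of the introduction and ultimately the main theorem (e.g.\ for groups of order $4$) need every prime. When $p=2$ one has $t^2=(x-1)^2=x^2-1=0$ in ${\bf F}_2[C_2]$, hence $J^2=t^2A_0=0$. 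Then $1+J^2$ is the trivial group, $W=J^2/J^3=0$, and in fact $1+J$ is abelian, since $(1+a)(1+b)=1+a+b$ for all $a,b\in J$. Consequently every one of your elements $g_n$ --- a commutator of two elements of $1+J$ --- is equal to $1$, and your invariant has nothing to act on: for $p=2$ the construction produces only the trivial class and says nothing about $X$.

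The repair is to anchor the argument one level higher in the filtration, which is what the paper does. Instead of commutators of two unipotent elements, take commutators mixing $1+J$ with the free group, e.g.\ $\delta_n=(1+th)g^n(1+th)^{-1}g^{-n}$ (in the paper's notation, $(1+yt)s^n(1+yt)^{-1}s^{-n}$ with $y=1-x$). These lie in $1+J$ but not in $1+J^2$, and they are nontrivial for \emph{every} prime, including $p=2$. The first-layer map $w_1:1+J\to J/J^2\cong{\bf F}_p[F_m]$ is again a homomorphism, sends $\delta_n$ to $h-g^nhg^{-n}$, and obeys the same transformation law $w_1\mapsto\gamma w_1\gamma^{-1}+ce$ that you derived; the separation step then needs a small change, since both supports now have zero $g$-exponent sum: matching supports gives $h=\gamma h\gamma^{-1}$, hence $\gamma\in\langle h\rangle$ because centralizers of generators in $F_m$ are cyclic, and then $g^nhg^{-n}=\gamma g^mhg^{-m}\gamma^{-1}$ forces $\gamma=1$ and $n=m$ by comparing reduced words. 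This is precisely the paper's argument, phrased there as comparing coefficients of $y^0$ and $y^1$. As written, your proof establishes the proposition only for $p\ge 3$.
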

\begin{proof}
Let $x$ be a generator of $C_p$ and define $y=(1-x)\in {\bf F}_p[C_p]$; then $y^p=0$. Let $s$ and $t$ be two generators of $F_m$ and define 
$$
\delta_n = (1+yt)s^n(1+yt)^{-1}s^{-n}\in [{\bf F}_p[C_p\times F_m]^*,{\bf
F}_p[C_p\times F_m]^*].
$$
We claim that $\{\delta_n \ | \ n\in {\bf N}\}$ is a set of distinct coset
representatives in $X$. Suppose that $[\delta_n]=[\delta_m]$; then there exists
$u\in {\bf Z}[\zeta_{p^2}][F_m]^*$ and $u'\in {\bf Z}[C_p\times F_m]^*$ such that
$\delta_n = \psi_-(u)\delta_m\psi_+(u')$. In fact since $u$ and $u'$ are necessarily
trivial units
$$
\delta_n=\psi_-(a)\psi_+(b)w\delta_m v
$$
for some $a\in {\bf Z}[\zeta_{p^2}]^*$, $b\in {\bf Z}[C_p]^*$ and some $w, v \in F_m$.
The units of ${\bf F}_p[C_p]$ are of the form $c+dy$ where $c\in {\bf F}_p^*$ and
$d\in {\bf F}_p$, as ${\bf F}_p[C_p]$ is a local ring with maximal ideal generated by
$y$. Therefore we have
$$
(1+yt)s^n(1+yt)^{-1}s^{-n}=(c+dy)w(1+yt)s^m(1+yt)^{-1}s^{-m}v.
$$
Expanding both sides and comparing coefficients of $y^0$ we have: $d=0$, $c=1$ and
$w^{-1}=v$. Comparing coefficients of $y^1$ gives
$$
t-s^nts^{-n}=wtw^{-1} - ws^mts^{-m}w^{-1},
$$
and so we must have 
$$
t=wtw^{-1}\ \mbox{and} \ s^nts^{-n}=ws^mts^{-m}w^{-1}.
$$
The first equation shows that $w=1(=v)$ and the second shows that $m=n$; therefore the $\delta_i$ form a set of distinct coset representatives. 
\end{proof}
Putting (\ref{infi}) and (\ref{isf}) together proves ({\bf I}) of the introduction:

\begin{prop}\label{Ii}
For every prime $p$ and every $m\ge 2$, $\mathcal{SF}_1({\bf Z}[C_{p^2}\times F_m])$
is infinite.
\end{prop}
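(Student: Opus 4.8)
The plan is to read Proposition~\ref{Ii} as an immediate consequence of the two results just established. In the Milnor square $\mathcal{A}$ constructed above the four corners are $A={\bf Z}[C_{p^2}\times F_m]$, $A_-={\bf Z}[\zeta_{p^2}][F_m]$, $A_+={\bf Z}[C_p\times F_m]$ and $A_0={\bf F}_p[C_p\times F_m]$, with $\psi_-$ the right-hand vertical map and $\psi_+$ the bottom horizontal map, both landing in $A_0$. Since $\psi_+$ is reduction modulo $p$ it is surjective, so $\mathcal{A}$ is genuinely a Milnor square and Proposition~\ref{isf} applies. That proposition tells me that to conclude $\mathcal{SF}_1(A)$ is infinite it suffices to produce an infinite double coset space $\psi_-(A_-^*)\backslash[A_0^*,A_0^*]/\psi_+(A_+^*)$.

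I would then observe that this double coset space is, corner for corner, exactly the set $X$ appearing in Proposition~\ref{infi}, which that proposition has already shown to be infinite whenever $m\ge2$. Combining the two: Proposition~\ref{infi} furnishes infinitely many classes in $[A_0^*,A_0^*]$ that remain distinct modulo the unit images $\psi_-(A_-^*)$ and $\psi_+(A_+^*)$, and Proposition~\ref{isf} promotes each such class $[a_i]$ to a rank-one locally free module $P_i=\langle A_+,A_-;(a_i)\rangle$ that is stably free via the Whitehead-lemma argument. Distinct classes give non-isomorphic modules by the bijection~(\ref{oneone}), since two representatives of $[A_0^*,A_0^*]$ lie in the same full double coset $\psi_-(A_-^*)\backslash A_0^*/\psi_+(A_+^*)$ precisely when they lie in the same class of the restricted space $X$. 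This delivers infinitely many non-isomorphic rank-one stably free modules over ${\bf Z}[C_{p^2}\times F_m]$, which is statement~\textbf{(I)}.

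Because the substantive content lies entirely in the preceding propositions, I do not expect any genuine obstacle at this final step. The only point worth flagging is the origin of the hypothesis $m\ge2$: it enters solely through Proposition~\ref{infi}, whose distinguished commutators $\delta_n=(1+yt)s^n(1+yt)^{-1}s^{-n}$ require two independent free generators $s$ and $t$ to be pairwise distinct modulo the unit images. The conjugating generator $t$ sits inside the nontrivial unit $1+yt$ of the local ring ${\bf F}_p[C_p]$, while the separate generator $s$ is what the index $n$ counts; both roles are needed. Hence the argument is sharp in its dependence on $m$ and yields nothing in the case $m=1$.
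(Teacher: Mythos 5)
Your proposal is correct and coincides with the paper's own argument: the paper proves Proposition~\ref{Ii} precisely by applying Proposition~\ref{isf} to the Milnor square $\mathcal{A}$ (noting $\psi_+$ is surjective) with the infinite double coset space supplied by Proposition~\ref{infi}. Your supplementary remarks --- that distinctness of classes in $X$ is by definition distinctness in the full double coset space underlying the bijection~(\ref{oneone}), and that the hypothesis $m\ge2$ enters only through the two generators $s,t$ needed in Proposition~\ref{infi} --- are accurate glosses on what the paper leaves implicit.
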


The proof of ({\bf II}) is very similar. Let $A$ be a ring and consider the Milnor
square
\begin{equation}\label{square2}
\vcenter{\xymatrix{A[x]/(x^p-1)\ar[d]\ar[r]& A[x]/(1+x+\ldots +x^{p-1})\ar[d]\\
A\ar[r]& A/p}}
\end{equation}
Setting $A={\bf Z}[y]/(y^p-1)$ we have
$$\xymatrix{{\bf Z}[x,y]/(x^p-1,y^p-1)\ar[d]\ar[r]& {\bf Z}[x,y]/(\Sigma_x,y^p-1
)\ar[d]\\
{\bf Z}[y]/(y^p-1)\ar[r]& {\bf F}_p[y]/(y^p-1)}
$$
where $\Sigma_x = 1 + x+\ldots x^{p-1}$.
Making the identifications ${\bf Z}[x,y]/(x^p-1,y^p-1)={\bf Z}[C_p\times C_p]$,
${\bf Z}[x,y]/(\Sigma_x,y^p-1)= {\bf Z}[\zeta_p][C_p]$ and $B[y]/(y^p-1)=B[C_p]$ and
tensoring with ${\bf Z}[F_m]$ we have
$$\mathcal{B}=\left\{\vcenter{\xymatrix{{\bf Z}[C_p\times C_p\times
F_m]\ar[d]\ar[r]& {\bf Z}[\zeta_p][C_p\times F_m]\ar[d]^{\phi_-}\\
{\bf Z}[C_p\times F_m]\ar[r]^{\phi_+}& {\bf F}_p[C_p\times F_m]}}\right.
$$
We first need to show that ${\bf Z}[\zeta_p][C_p\times F_m]$ has only trivial units:

\begin{prop}\label{units2}
${\bf Z}[\zeta_p][C_p\times F_m]^*= ({\bf Z}[\zeta_p][C_p])^*\times F_m$.
\end{prop}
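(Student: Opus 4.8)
The plan is to follow the strategy of Proposition~\ref{unit}, but the essential new difficulty is that the relevant ``cyclotomic corner'' is no longer a domain. Writing $C_p=\langle y\rangle$ and factoring $y^p-1=(y-1)c_p(y)$ over ${\bf Z}[\zeta_p]$, the factor $c_p(y)=\prod_{i=1}^{p-1}(y-\zeta_p^{\,i})$ now \emph{splits}, so $R:={\bf Z}[\zeta_p][C_p]={\bf Z}[\zeta_p][y]/(y^p-1)$ has zero divisors and one cannot merely invoke triviality of units over a domain group ring. Instead I would use that $R$ is nevertheless reduced and \emph{connected}, and that it embeds into a product of copies of the domain ${\bf Z}[\zeta_p]$.

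For each $i\in\{0,1,\dots,p-1\}$ the assignment $y\mapsto\zeta_p^{\,i}$ gives a ring homomorphism $\theta_i:R\to{\bf Z}[\zeta_p]$, and together these form an injection $\Phi:R\hookrightarrow\prod_{i=0}^{p-1}{\bf Z}[\zeta_p]$, since a polynomial of degree ${}<p$ vanishing at all $p$ roots $\zeta_p^{\,i}$ must be zero. As ${\bf Z}[F_m]$ is free over ${\bf Z}$, tensoring preserves injectivity, so the induced map $\Phi:R[F_m]\hookrightarrow\prod_i{\bf Z}[\zeta_p][F_m]$, with components $\Theta_i$, is again injective. Given a unit $u=\sum_g c_g\,g$ with $c_g\in R$, each $\Theta_i(u)$ is a unit of ${\bf Z}[\zeta_p][F_m]$; since ${\bf Z}[\zeta_p]$ is a domain this ring has only trivial units (just as for ${\bf Z}[\zeta_{p^2}]$ above), so $\Theta_i(u)=b_i w_i$ with $b_i\in{\bf Z}[\zeta_p]^*$ and $w_i\in F_m$. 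Comparing coefficients gives $\theta_i(c_g)=b_i$ if $w_i=g$ and $\theta_i(c_g)=0$ otherwise; in particular $\mathrm{supp}(u)=\{w_0,\dots,w_{p-1}\}$. Applying the augmentation $\epsilon:R[F_m]\to R$ that trivialises $F_m$, the element $b:=\epsilon(u)=\sum_g c_g$ is a unit of $R$ with $\Phi(b)=(b_0,\dots,b_{p-1})$.

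It follows that for each $g\in\mathrm{supp}(u)$ the element $c_g b^{-1}\in R$ satisfies $\Phi(c_g b^{-1})=(\delta_{w_i,g})_i$, which is an idempotent of $\prod_i{\bf Z}[\zeta_p]$. The crux is then to show that $R$ has no nontrivial idempotents: granting this, each $c_g b^{-1}$ is $0$ or $1$, so $\{i:w_i=g\}$ is empty or everything, $\mathrm{supp}(u)$ collapses to a single $w\in F_m$, and $u=bw$ is a trivial unit as claimed. I expect this connectedness to be the main obstacle, and I would establish it by reducing modulo the totally ramified prime $(1-\zeta_p)$: since ${\bf Z}[\zeta_p]/(1-\zeta_p)\cong{\bf F}_p$, an idempotent of $R$ maps into the local ring ${\bf F}_p[C_p]={\bf F}_p[y]/(y-1)^p$ and hence reduces to $0$ or $1$, so after subtracting a constant it lies in $\bigcap_n(1-\zeta_p)^n R$; as $R$ is free over the Dedekind domain ${\bf Z}[\zeta_p]$ and $\bigcap_n(1-\zeta_p)^n=0$ there, this intersection vanishes.
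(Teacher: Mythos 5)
Your proof is correct, and it takes a genuinely different route from the paper's --- one that, as it happens, also repairs a flaw in the paper's own argument. The paper proves this proposition by re-running the template of Proposition \ref{unit}: setting $A={\bf Z}[y]/(\Sigma_y)$ in square (\ref{square2}), it presents ${\bf Z}[\zeta_p][C_p\times F_m]$ as a fibre product of ${\bf Z}[y]/(\Sigma_y)[F_m]$ and ${\bf Z}[x,y]/(\Sigma_x,\Sigma_y)[F_m]$ over ${\bf F}_p[y]/(\Sigma_y)[F_m]$, asserts that both coefficient rings are integral domains (so that both corners have only trivial units), and then compares coefficients as in Proposition \ref{unit}. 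You instead split $y^p-1$ completely over ${\bf Z}[\zeta_p]$: you embed $R[F_m]$, where $R={\bf Z}[\zeta_p][C_p]$, into $\prod_{i=0}^{p-1}{\bf Z}[\zeta_p][F_m]$ via the $p$ evaluations $y\mapsto\zeta_p^i$, use triviality of units in each factor, and glue the resulting data back into a single trivial unit via the key lemma that $R$ has no nontrivial idempotents (reduction modulo the ramified prime $(1-\zeta_p)$, where the image ring ${\bf F}_p[y]/((y-1)^p)$ is local; the bootstrap $e=e^n\in(1-\zeta_p)^nR$; and $\bigcap_n(1-\zeta_p)^n{\bf Z}[\zeta_p]=0$). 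All of your steps check: the injectivity of $\Phi$, its preservation under tensoring with the flat module ${\bf Z}[F_m]$, the use of the augmentation to produce the unit $b\in R^*$, and the idempotent argument are sound. The comparison is notably in your favour here, because the paper's stated justification fails for $p\ge3$: ${\bf Z}[x,y]/(\Sigma_x,\Sigma_y)\cong{\bf Z}[\zeta_p][x]/(\Sigma_x)$ is \emph{not} an integral domain, precisely because, as you observe, $\Sigma_x=\prod_{i=1}^{p-1}(x-\zeta_p^i)$ splits over ${\bf Z}[\zeta_p]$, so that $(x-\zeta_p)\cdot\prod_{i=2}^{p-1}(x-\zeta_p^i)=0$ exhibits zero divisors. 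Your connectedness lemma is exactly the missing ingredient: applied to ${\bf Z}[\zeta_p][x]/(\Sigma_x)$ (which is connected by the same reduction mod $(1-\zeta_p)$), it shows that this corner still has only trivial units in its $F_m$-group ring, and hence would also rescue the paper's shorter fibre-square argument. In short, the paper's route is more economical when its input holds, while yours is self-contained, correct as written, and confronts head-on the splitting of the cyclotomic factor, which is the genuine difficulty in this proposition.
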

\begin{proof}
Consider the Milnor square formed by setting $A={\bf Z}[y]/(1+y+\ldots + y^p)={\bf Z}[y]/(\Sigma_y)$ in
(\ref{square2}), tensoring with ${\bf Z}[F_m]$ and then taking unit groups:
$$\xymatrix{{\bf Z}[x,y]/(x^p-1,\Sigma_y)[F_m]^*\ar[d]\ar[r]& {\bf
Z}[x,y]/(\Sigma_x,\Sigma_y)[F_m]^*\ar[d]\\
{\bf Z}[y]/(\Sigma_y)[F_m]^*\ar[r]& {\bf F}_p[y]/(\Sigma_y)[F_m]^*}
$$
Since both ${\bf Z}[x,y]/(\Sigma_x,\Sigma_y)$ and ${\bf Z}[y]/(\Sigma_y)$ are
integral domains the corresponding corners have only trivial units. A similar proof
to that of (\ref{unit}) now applies.
\end{proof}
Essentially the same proof as that of (\ref{infi}) proves:

\begin{prop}\label{infi2} If $m\ge 2$ then
$$\phi_-({\bf Z}[\zeta_p][C_p\times F_m]^*)\backslash[{\bf F}_p[C_p\times F_m]^*,{\bf
F}_p[C_p\times F_m]^*]/\phi_+({\bf Z}[C_p\times F_m]^*)$$ is infinite.
\end{prop}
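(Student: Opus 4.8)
The plan is to mimic the proof of Proposition \ref{infi} essentially verbatim, since the setup of the Milnor square $\mathcal{B}$ is structurally identical to that of $\mathcal{A}$: the lower-right corner is again ${\bf F}_p[C_p\times F_m]$, the left-hand corner is again ${\bf Z}[C_p\times F_m]$ (with map $\phi_+$), and by Proposition \ref{units2} the upper-right corner ${\bf Z}[\zeta_p][C_p\times F_m]$ has only trivial units, just as ${\bf Z}[\zeta_{p^2}][F_m]$ did. The only genuine difference is that the ``domain'' corner now carries an extra copy of $C_p$: its unit group is $({\bf Z}[\zeta_p][C_p])^*\times F_m$ rather than $({\bf Z}[\zeta_{p^2}])^*\times F_m$. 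First I would again let $x$ generate the \emph{second} factor $C_p$ (the one that survives in all three rings), set $y=(1-x)\in{\bf F}_p[C_p]$ so that $y^p=0$, pick two free generators $s,t$ of $F_m$, and define the same family
$$
\delta_n=(1+yt)s^n(1+yt)^{-1}s^{-n}\in[{\bf F}_p[C_p\times F_m]^*,{\bf F}_p[C_p\times F_m]^*].
$$

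Next I would show the $\delta_n$ are distinct in the double-coset space. Suppose $[\delta_n]=[\delta_m]$; then $\delta_n=\phi_-(u)\,\delta_m\,\phi_+(u')$ for units $u,u'$ in the two side rings. The key reduction is that both $u$ and $u'$ are trivial units, so after pushing them into ${\bf F}_p[C_p\times F_m]$ they contribute only a factor $\phi_-(a)\phi_+(b)$ with $a\in({\bf Z}[\zeta_p][C_p])^*$, $b\in({\bf Z}[C_p])^*$, together with free-group elements $w,v\in F_m$. The crucial point is that the images of $a$ and $b$ both land in ${\bf F}_p[C_p]$, which is a commutative local ring with maximal ideal $(y)$ and nilpotent $y$; hence their product is again of the form $c+dy+\cdots$ expressible in powers of $y$, commuting with everything. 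So I would arrive at an equation of the form
$$
(1+yt)s^n(1+yt)^{-1}s^{-n}=\bigl(c+dy+\cdots\bigr)\,w\,(1+yt)s^m(1+yt)^{-1}s^{-m}\,v,
$$
and then compare coefficients of $y^0$ and $y^1$ exactly as before to force $w=v=1$, the leading unit to be $1$, and finally $n=m$.

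The one step I expect to require care, and which I would flag as the main obstacle, is the coefficient comparison in powers of $y$: in Proposition \ref{infi} the units of ${\bf F}_p[C_p]$ had the clean normal form $c+dy$ with $c\in{\bf F}_p^*$, $d\in{\bf F}_p$, so the $y^0$ and $y^1$ comparisons were immediate. Here ${\bf F}_p[C_p]\cong{\bf F}_p[y]/(y^p)$ and a general unit is $c+d_1y+d_2y^2+\cdots+d_{p-1}y^{p-1}$ with $c\in{\bf F}_p^*$. I would therefore argue that comparing the lowest-degree ($y^0$) coefficients still forces $c=1$ and $w^{-1}=v$, and that comparing the $y^1$ coefficients still yields the identity $t-s^nts^{-n}=wtw^{-1}-ws^mts^{-m}w^{-1}$ in the free ${\bf F}_p$-module on $F_m$, whence $w=v=1$ and $n=m$ as in the original proof; the higher powers of $y$ impose only further (automatically satisfied) constraints and do not interfere. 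Because ${\bf Z}[\zeta_p][C_p]$ is no longer a domain its full unit group is more complicated, but since all that is used is that its image in ${\bf F}_p[C_p]$ consists of elements with invertible constant term, this added complexity is harmless, and the argument goes through.
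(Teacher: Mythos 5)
Your proposal is correct and takes essentially the same approach as the paper: the paper proves Proposition \ref{infi2} simply by observing that the proof of Proposition \ref{infi} carries over to the square $\mathcal{B}$, with the generator of the surviving $C_p$ factor playing the role of $x$, exactly as you do. Your extra care about general units of ${\bf F}_p[C_p]\cong {\bf F}_p[y]/(y^p)$ having the form $c+d_1y+\cdots+d_{p-1}y^{p-1}$ is well placed (the paper's own claim in \ref{infi} that units have the form $c+dy$ is not literally accurate), and your observation that only the $y^0$ and $y^1$ coefficients enter the comparison is precisely why the argument still goes through.
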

Together (\ref{infi2}) and (\ref{isf}) prove ({\bf II}) of the introduction:

\begin{prop}\label{II}
For every prime $p$ and every $m\ge 2$, $\mathcal{SF}_1({\bf Z}[C_{p}\times
C_p\times F_m])$ is infinite.
\end{prop}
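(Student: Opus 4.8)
The plan is to combine Propositions \ref{infi2} and \ref{isf} exactly as was done for part (\textbf{I}). The Milnor square $\mathcal{B}$ has been constructed, and the key arithmetic input has already been verified: Proposition \ref{units2} establishes that the upper-right corner ${\bf Z}[\zeta_p][C_p\times F_m]$ has only trivial units, and Proposition \ref{infi2} asserts that the relevant double-coset space built from the commutator subgroup of the lower-right corner ${\bf F}_p[C_p\times F_m]^*$ is infinite. So the main theorem follows by a direct appeal to the machinery already assembled.

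First I would invoke Proposition \ref{infi2}, which tells us that
$$
\phi_-({\bf Z}[\zeta_p][C_p\times F_m]^*)\backslash[{\bf F}_p[C_p\times F_m]^*,{\bf F}_p[C_p\times F_m]^*]/\phi_+({\bf Z}[C_p\times F_m]^*)
$$
is infinite whenever $m\ge 2$. This is precisely the hypothesis required by Proposition \ref{isf} for the Milnor square $\mathcal{B}$: taking $A_0={\bf F}_p[C_p\times F_m]$, $A_-={\bf Z}[\zeta_p][C_p\times F_m]$ and $A_+={\bf Z}[C_p\times F_m]$, the group $\psi_-(A_-^*)\backslash[A_0^*,A_0^*]/\psi_+(A_+^*)$ is infinite. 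Proposition \ref{isf} then immediately yields that $\mathcal{SF}_1(A)$ is infinite, where $A$ is the fibre product, namely ${\bf Z}[C_p\times C_p\times F_m]$. This completes the proof.

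There is essentially no obstacle remaining at this stage, since all the substantive work has been done upstream. The one point I would check is that the Milnor square $\mathcal{B}$ genuinely satisfies the two defining conditions of a Milnor square: that the top-left corner is the fibre product of the other two corners over ${\bf F}_p[C_p\times F_m]$, and that the map $\phi_+:{\bf Z}[C_p\times F_m]\to{\bf F}_p[C_p\times F_m]$ is surjective. Both follow from the general fact that tensoring a Milnor square arising from an ideal-theoretic fibre square (of the form $R/(I\cap J)$, $R/I$, $R/J$, $R/(I+J)$) with the flat ${\bf Z}$-algebra ${\bf Z}[F_m]$ preserves the fibre-product structure and surjectivity; the reduction map to the finite field coefficients is plainly surjective. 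Granting this, the chain of implications closes and the proof reduces to a one-line citation of the two preceding propositions.
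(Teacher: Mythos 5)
Your proposal is correct and follows exactly the paper's own route: the paper proves Proposition \ref{II} by the one-line observation that Proposition \ref{infi2} supplies precisely the hypothesis of Proposition \ref{isf} for the Milnor square $\mathcal{B}$, whose fibre product is ${\bf Z}[C_p\times C_p\times F_m]$. Your additional check that $\mathcal{B}$ is a genuine Milnor square (flatness of ${\bf Z}[F_m]$ preserving the fibre-product structure, surjectivity of $\phi_+$) is a point the paper leaves implicit, but it is a correct and welcome verification rather than a divergence.
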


\section{Proof of main theorem}

Let $G$ be a finite group and let $H$ be a normal subgroup of $G$.  We may form the
Milnor square:
$$\xymatrix{{\bf Z}[G] \ar[r]\ar[d]&{\bf Z}[G]/(\Sigma_H)\ar[d]\\
{\bf Z}[G/H]\ar[r] &({\bf Z}/N)[G/H]}
$$
where $\Sigma_H = \sum_{h\in H}h$ and $N = | H |$. Tensoring with ${\bf Z}[F_m]$ we
have:
$$\xymatrix{{\bf Z}[G\times F_m] \ar[r]\ar[d]&{\bf Z}[G\times F_m]/(\Sigma_H)\ar[d]\\
{\bf Z}[G/H\times F_m]\ar[r] &({\bf Z}/N)[G/H\times F_m]}
$$
Now by (\ref{fin}) and (\ref{fin2}), $({\bf Z}/N)[G/H\times F_m]$ has SFC and is
generalized Euclidean. Hence by (\ref{up}) the induced map 
$$\mathcal{SF}_1({\bf
Z}[G\times F_m]) \to \mathcal{SF}_1({\bf Z}[G/H\times F_m])\times
\mathcal{SF}_1({\bf Z}[G\times F_m]/(\Sigma_H))$$
 is surjective and thus:

\begin{prop}\label{C}
Let $G$ be a finite group with normal subgroup $H\lhd G$. If $\mathcal{SF}_1({\bf
Z}[G/H\times F_m])$ is infinite then so is $\mathcal{SF}_1({\bf Z}[G \times F_m])$.
\end{prop}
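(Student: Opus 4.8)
The plan is to read off the conclusion directly from the surjectivity statement established in the paragraph immediately preceding the proposition. All of the substantive work has in fact already been carried out there: the Milnor square relating ${\bf Z}[G\times F_m]$ to ${\bf Z}[G/H\times F_m]$ and ${\bf Z}[G\times F_m]/(\Sigma_H)$ has been constructed, its bottom-right corner $({\bf Z}/N)[G/H\times F_m]$ has been shown to have SFC (by (\ref{fin})) and to be generalized Euclidean (by (\ref{fin2})), and consequently (\ref{up}) supplies a surjection
$$\pi_+\times\pi_-:\mathcal{SF}_1({\bf Z}[G\times F_m]) \to \mathcal{SF}_1({\bf Z}[G/H\times F_m])\times \mathcal{SF}_1({\bf Z}[G\times F_m]/(\Sigma_H)).$$

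First I would compose this surjection with the projection onto the first factor of the product. Since a projection from a product onto one of its factors is itself surjective, the composite
$$\mathcal{SF}_1({\bf Z}[G\times F_m]) \to \mathcal{SF}_1({\bf Z}[G/H\times F_m])$$
is surjective. A set that admits a surjection onto an infinite set is necessarily infinite; hence the hypothesis that $\mathcal{SF}_1({\bf Z}[G/H\times F_m])$ is infinite forces $\mathcal{SF}_1({\bf Z}[G\times F_m])$ to be infinite, which is exactly the claim.

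There is essentially no obstacle to overcome at this point, since the only genuine difficulty---namely establishing the surjectivity of $\pi_+\times\pi_-$---is precisely the content of (\ref{up}), whose hypotheses were verified before the statement of the proposition. The one small point worth recording explicitly is that we need only the infinitude of the single factor $\mathcal{SF}_1({\bf Z}[G/H\times F_m])$, and may discard all information about the other factor $\mathcal{SF}_1({\bf Z}[G\times F_m]/(\Sigma_H))$; the projection argument makes this transparent.
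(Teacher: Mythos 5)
Your proposal is correct and follows exactly the paper's own argument: the paper likewise deduces Proposition \ref{C} from the surjection supplied by (\ref{up}) applied to the Milnor square with corner $({\bf Z}/N)[G/H\times F_m]$, whose hypotheses are verified via (\ref{fin}) and (\ref{fin2}), and then projects onto the factor $\mathcal{SF}_1({\bf Z}[G/H\times F_m])$. Nothing is missing.
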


Let $G$ be a finite group of order $p^k$ where $p$ is prime and $k\ge2$. Then there
exists a normal subgroup $H \lhd G$ such that $|H|=p^{k-2}$ (see \cite{fgt} p.24).
Hence either $G/H \cong C_{p^2}$ or $G/H\cong C_p\times C_p$; in either case by
(\ref{Ii}), (\ref{II}) and (\ref{C}) $\mathcal{SF}_1({\bf Z}[G\times F_m])$ is
infinite. 

Now let $G$ be a finite nilpotent group of non square-free order. Since $G$ is
nilpotent, $G$ is the direct product of its Sylow subgroups (see \cite{fgt}, p.24)
say $G\cong H_1\times \ldots \times H_r$. As $|G|$ is non square-free we may choose
a prime $p$ such that $p^k$ is the largest power of $p$ dividing $|G|$ and where
$k\ge2$. Therefore at least one of the $H_i$ has order $p^k$ --- assume without loss
of generality that $|H_1|= p^k$. Then $|G/(H_2\times \ldots \times H_r)| = p^k$ and
so by (\ref{C}) this completes the proof of our main theorem (\ref{A}).

\subsection*{Acknowledgment}

The author wishes to thank his supervisor, Professor F. E. A. Johnson both for suggesting the problem and for many invaluable discussions.

\noindent {\bf Department of Mathematics,\\ University College London,\\ Gower Street,\\ London, U.K.\\ email: s.o'shea@ucl.ac.uk}
\end{document}